\documentclass[11pt,a4paper,twoside]{amsart}
\usepackage{amsmath,amssymb,amsfonts,amsthm}
\usepackage{hyperref}


\setcounter{MaxMatrixCols}{10}

\def\K{{\mathcal{K}}}
\def\m{\stackrel{1}{M}}
\def\mm{\stackrel{2}{M}}

\def\K{\stackrel{1}{K}}
\def\KK{\stackrel{2}{K}}
\def\KKK{\stackrel{k}{K}}
\def\st{\stackrel}

\def\to{\longrightarrow}
\def\mto{\longmapsto}
\def\a{\alpha}
\def\b{\beta}
\def\e{\eta}
\def\g{\gamma}
\def\p{\phi}
\def\P{\Phi}
\def\G{\Gamma}
\def\s{\psi}
\def\S{\Psi}

\def\eps{\epsilon}
\def\r{\rho}
\def\o{\circ}

\def \dar{\times}
\def \lim{\varprojlim}

\def\N{\mathbb{N}}

\def\F{\mathbb{F}}
\def\R{\mathbb{R}}
\def\E{\mathbb{E}}

\newtheorem{The}{Theorem}[section]
\newtheorem{Pro}[The]{Proposition}
\newtheorem{Lem}[The]{Lemma}
\newtheorem{Cor}[The]{Corollary}
\theoremstyle{definition}
\newtheorem{Def}[The]{Definition}
\newtheorem{Rem}[The]{Remark}
\newtheorem{Examp}[The]{Example}
\thanks{2010 Mathematical Subject Classification.
   Primary 58B20; Secondary 58A05.}

\begin{document}
\title{Higher order tangent bundles}
\author{Ali Suri }
\address{Department  of Mathematics, Faculty of sciences, \\
Bu Ali Sina University, Hamedan 65178, Iran.}
\email{ali.suri@yahoo.com \& a.suri@math.iut.ac.ir \& ali.suri@gmail.com}
\maketitle {\hspace{2.5cm}}

\begin{abstract}
The tangent bundle $T^kM$ of order $k$,  of a smooth Banach
manifold $M$ consists of all equivalent classes of curves that
agree up to their accelerations of order $k$. For a Banach
manifold $M$ and a natural number $k$ first we determine a smooth
manifold structure on $T^kM$ which also offers a fiber bundle
structure for $(\pi_k,T^kM,M)$. Then we introduce a particular
lift of linear connections on $M$  to geometrize $T^kM$ as a
vector bundle over $M$. More precisely based on this lifted
nonlinear connection we prove that $T^kM$ admits a vector bundle
structure over $M$ if and only if $M$ is endowed with a linear
connection. As a consequence applying this vector bundle structure
we lift  Riemannian metrics and Lagrangians from $M$ to $T^kM$. Also, using the
projective limit techniques, we declare a generalized Fr\'{e}chet
vector bundle structure for $T^\infty M$ over $M$.\\

\textbf{Keywords}: Banach manifold;
Linear connection; Connection map; Higher order tangent bundle;
Fr\'{e}chet manifold; lifting of Riemannian metrics,  Lagrangians.
\end{abstract}

\pagestyle{headings} \markright{Higher order tangent bundles.}

\section{Introduction}

Higher order tangent bundles $T^kM$ of  a smooth manifold $M$ as
the space of all equivalent classes of curves that agree up to
their accelerations of order $k$, is a natural generalization of
the notion of tangent bundle $TM$. Higher order geometry had
witnessed a wide interest due to the works of  Bucataru, Dodson,
Miron, Morimoto and others \cite{Ioan}, \cite{Dod-Gal},
\cite{Miron}, \cite{Morimoto}. The geometry of $T^kM$ in the
finite dimensional case is developed by Miron and his school
\cite{Miron}. They studied higher order Lagrangians and also
prolongation of Riemannian metrics, Finsler structures and
 Lagrangians to $T^kM$.

However,  even for the case of $n=2$, constructing a vector bundle
(for abbreviation v.b.) structure on $T^2M$ over $M$ is not as
evident as in the case of $TM$. More precisely sometimes it is
impossible to define a v.b. structure on $T^2M$.  Dodson and
Galanis \cite{Dod-Gal} proved that for a Banach manifold $M$,
$T^2M$ can be thought of as a Banach vector bundle over $M$ if and
only if $M$ is endowed with a linear connection. The author proved
the same result in a different way to geometrize the bundle of
accelerations with more tools like second order covariant
derivative, exponential mapping and an appropriate second order
Lie bracket \cite{ali}.
In this paper, in order to geometrize  higher order tangent
bundles, first we introduce an special lifted connection which
will plays a pivotal role in our main theorem. Then  we prove that
for any $k\in \N$, $T^kM$ can be thought of as a v.b. over $M$ with
the structure group $ GL(\E^k)$ if and only if $M$
admits a linear connection. Furthermore  this result considerably
eases constructing a v.b. structure on $\pi_{ji}:T^jM\to T^iM$ for
$j>i$. We shall also show that if for some $k\geq 2$,
$(\pi_k,T^kM,M)$ becomes a v.b.  isomorphic to $\oplus_{i=1}^kTM$, then for any $n\in\N\cup \{\infty\}$,
$T^nM$ admits a v.b. structure over $M$. More precisely in the
case of infinite order, $T^\infty M$ becomes a Fr\'{e}chet
manifold which may be thought of as a generalized v.b. over $M$.
Moreover the structure group becomes a generalized Fr\'{e}chet lie
group which represents the advantage of using projective limit
techniques.

Another old problem in geometry is that of prolongation of
Riemannian and Lagrangian structures to the tangent bundles $T^kM$. These problems
can also be solved as  a consequence of our main theorem. Finally using the restricted symplectic group we propose an example to support our theory. However, for more examples we refer to \cite{iso Osck} and \cite{hfb}.

Trough this paper all the maps and manifolds are assumed to be smooth,
but, except  in section \ref{Section T infty M}, a lesser degree of differentiability can be assumed. Whenever partition of unity is necessary, we assume that our manifolds are partitionable \cite{Lang, ali}.

Most of the results of this paper are novel even for the case that
$M$ is a finite dimensional manifold.
\vspace{-12mm}
\section{Preliminaries}
Let M be a manifold, possibly infinite dimensional, modelled on
the Banach space $\E$. For any $x_0\in M$ define \vspace{-2mm}
\begin{equation*}
C_{x_0}:=\{\g:(-\eps,\eps)\to M~;~ \g(0)=x_0  ~ \textrm{and}~ \g
~\textrm{is smooth }\}.
\end{equation*}
As a natural extension of the  tangent bundle $TM$ define the
following equivalence relation. The curves  $\g_1,\g_2 \in C_{x_0}$
are said to be $k$-equivalent, denoted by $\g_1\approx_{x_0}^k\g_2$,
if and only if $\g_1^{(j)}(0)=\g_2^{(j)}(0)$  for $1\leq j\leq k$.
Define ${T}^k_{x_0}M:=C_{x_0}/\approx_{x_0}^k$ and the \textbf{
tangent bundle of order  $k$ or $k$-osculating bundle} of M to be
${T}^kM:=\bigcup_{x\in M}{T}^k_{x}M$. Denote by $[\g,{x_0}]_k$ the
representative of the equivalence class containing $\g$  and define
the canonical projection ${\pi}_k:T^kM\to M$ which projects
$[\g,{x_0}]_k$ onto $ x_0$.

Let $\mathcal{A}=\{(U_\a,\s_\a)\}_{\a\in I}$ be a $C^\infty$ atlas
for $M$. For any $\a\in I$ define
\begin{eqnarray*}
\S_\a^k:{\pi_k}^{-1}(U_\a)&\to& \s_\a(U_\a)\times\E^k\\
{[\gamma,{x_0}]_k}&\mto& \big( (\s_\a
\circ\g)(0),(\s_\a\circ\g)'(0),...,\frac{1}{k!}(\s_\a\circ\g)^{(k)}(0)\big)
\end{eqnarray*}
%
%
%
%
\begin{The}\label{fibre bundle structure for TkM}
 The family
$\mathcal{B}=\{({\pi_k}^{-1}(U_\a),\S_\a^k)\}_{\a\in I}$ declares
a smooth manifold structure on $T^kM$ which models it on
$\E^{k+1}$.
\end{The}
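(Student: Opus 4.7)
The plan is to verify that $\mathcal{B}$ forms a smooth atlas in the standard way: each $\S_\a^k$ is a well-defined bijection of $\pi_k^{-1}(U_\a)$ onto the open set $\s_\a(U_\a)\times\E^k\subset\E^{k+1}$, and the overlap maps $\S_\b^k\o(\S_\a^k)^{-1}$ are smooth on their domains of definition. The topology on $T^kM$ is then the one declared by demanding that every $\S_\a^k$ be a homeomorphism; Hausdorffness descends from $M$ and from the identification of the model with $\E\times\E^k=\E^{k+1}$.

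First I would verify well-definedness and bijectivity of each $\S_\a^k$. Iterated application of the Fr\'echet chain rule (Fa\`a di Bruno) expresses $(\s_\a\o\g)^{(j)}(0)$ as a polynomial in $\g^{(1)}(0),\ldots,\g^{(j)}(0)$ whose coefficients are built from the multilinear derivatives $D^i\s_\a(x_0)$; because $\s_\a$ is a diffeomorphism this polynomial is invertible in the top order term, so $\g_1\approx_{x_0}^k\g_2$ if and only if $(\s_\a\o\g_1)^{(j)}(0)=(\s_\a\o\g_2)^{(j)}(0)$ for $1\le j\le k$. This simultaneously yields well-definedness of the map on equivalence classes and injectivity on each fiber. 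For surjectivity, given $(x,u_1,\ldots,u_k)\in\s_\a(U_\a)\times\E^k$, the curve $\g(t):=\s_\a^{-1}\!\bigl(x+\sum_{j=1}^{k}t^j u_j\bigr)$, defined for $t$ near $0$, satisfies $(\s_\a\o\g)^{(m)}(0)=m!\,u_m$ by a direct computation, hence $\S_\a^k([\g,\s_\a^{-1}(x)]_k)=(x,u_1,\ldots,u_k)$.

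Next I would check smoothness of the overlap. Set $\p:=\s_\b\o\s_\a^{-1}$ on $\s_\a(U_\a\cap U_\b)$ and use the explicit representative constructed above. For $(x,u_1,\ldots,u_k)$ in the overlap domain,
$$ (\S_\b^k\o(\S_\a^k)^{-1})(x,u_1,\ldots,u_k)=\Bigl(\p(x),\,(\p\o\e)'(0),\,\ldots,\,\tfrac{1}{k!}(\p\o\e)^{(k)}(0)\Bigr), $$
where $\e(t):=x+\sum_{j=1}^{k}t^j u_j$. Faa di Bruno again expresses the $m$-th component ($1\le m\le k$) as a universal polynomial in $u_1,\ldots,u_m$ with coefficients the multilinear maps $D^i\p(x)$ for $1\le i\le m$. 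Since $\p$ is smooth these coefficients depend smoothly on $x$, and since the dependence on $u_1,\ldots,u_k$ is polynomial, the overlap map is smooth on the open set $\s_\a(U_\a\cap U_\b)\times\E^k\subset\E^{k+1}$.

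The principal obstacle is really only the bookkeeping of Fa\`a di Bruno in a Banach setting, where one must treat the iterated derivatives as multilinear maps and track how they combine under composition. Since all we actually need is smoothness rather than an explicit normal form, this can be avoided by iterating the ordinary Fr\'echet chain rule on $\p\o\e$, which directly displays each $(\p\o\e)^{(m)}(0)$ as a smooth function of $(x,u_1,\ldots,u_m)$. The explicit polynomial structure is then retained only insofar as it is needed to produce the inverse of $\S_\a^k$ on each fiber.
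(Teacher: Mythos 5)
Your proposal is correct and follows essentially the same route as the paper: explicit polynomial representatives $t\mapsto \s_\a^{-1}(x+\sum_j t^j u_j)$ for surjectivity, and the Banach-space Fa\`a di Bruno formula to exhibit the overlap maps as polynomials in the fibre variables with coefficients $d^i\s_{\b\a}(x)$, hence smooth. The only difference is that you spell out the injectivity/well-definedness argument (invertibility of the top-order term in the chain rule) and the induced topology, which the paper merely asserts as easy.
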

\begin{proof}
Clearly $\S_\a^k$ is well defined and $\bigcup_{\a\in
I}{\pi_k}^{-1}(U_\a)=T^kM$. $\S_\a^k$ is surjective. In fact  for any
$(x,\xi_1,...,\xi_k)\in\s_a(U_\a)\times \E^k$ the class
$[\g,\s_\a^{-1}(x)]_k$, with $\g:=\psi_\a^{-1}\o\bar\g$ and
$\bar\g(t)=x+t\xi_1+...+t^k\xi_k$, is mapped  to
$(x,\xi_1,...,\xi_k)$ via $\S_\a^k$. It is easy to show that
$\S_\a^k$ is also injective.\\ For any $\a,\b\in I$ with
$U_{\b\a}:=U_\b\cap U_\a\neq{\emptyset}$ the overlap map
\begin{eqnarray*}
\S_{\b\a}^k:=\S_\b^k\o{\S_\a^k}^{-1}:\s_\a(U_{\b\a})\times\E^k \to
\s_\b(U_{\b\a})\times\E^k
\end{eqnarray*}
is given by
\begin{eqnarray*}
&&\S_{\b\a}^k(x,\xi_1,...,\xi_k)=\S_\b^k([\gamma,{x_0}]_k)\\
&=&\big((\s_\b\o\g)(0),(\s_\b\o\g)'(0),...,\frac{1}{k!}(\s_\b\o\g)^{(k)}(0)\big)\\
&=&\big((\s_\b\o\s_\a^{-1}\o\bar\g)(0),(\s_\b\o\s_\a^{-1}\o\bar\g)'(0),\dots,\frac{1}{k!}(\s_\b\o\s_\a^{-1}\o\bar\g)^{(k)}(0)\big)\\
&=&\big(\s_{\b\a}(x),d\s_{\b\a}(x)\xi_1,\dots,\frac{1}{k!}\{d\s_{\b\a}(x)[\bar{\gamma}^{(k)}(0)]\\
&&+\sum_{j_1+j_2=k}a^k_{(j_1,j_2)}d^2\s_{\b\a}(x)[\bar{\gamma}^{(j_1)}(0),\bar{\gamma}^{(j_2)}(0)]\\
&&
+...+d^k\s_{\b\a}(x)(\bar{\gamma}'(0),...,\bar{\gamma}'(0)]\}
\big)\\
&=&\big(\s_{\b\a}(x),d\s_{\b\a}(x)\xi_1,d\s_{\b\a}(x)(\xi_2)+\frac{1}{2}d^2\s_{\b\a}(x)(\xi_1,\xi_1),\dots\\
&&,\frac{1}{k!}\{d\s_{\b\a}(x)[k!\xi_k]+\sum_{j_1+j_2=k}a^k_{(j_1,j_2)}d^2\s_{\b\a}(x)[
j_1!\xi_{j_1},j_2!\xi_{j_2}]\\
&&+...+d^k\s_{\b\a}(x)(\xi_1,\dots,\xi_1)\}
\big)\\
\end{eqnarray*}
where $\s_{\b\a}=\s_\b\o\s_\a^{-1}$ and
$\bar\gamma(t)=x+t\xi_1+...+t^k\xi_k$ as before. Moreover we  used
the following explicit formula  for  the chain
rule of order $k$
%
%
\begin{eqnarray}\label{chain rule of order k lloyd}
&(\s_{\b\a}\o \bar{\gamma})^{(k)}(0)=d^k(\s_{\b\a}\o
\bar{\gamma})(0)(1,\dots,1)&\\
\nonumber &=\sum_{i=1}^k\sum \frac{k!}{j_1!\dots j_i! m_1!\dots m_k!}
d^i\s_{\b\a}(\bar{\gamma}(0))[\bar{\gamma}^{(j_1)}(0),\dots,\bar{\gamma}^{(j_i)}(0)]&
%
%
%
%
%
\end{eqnarray}
where the second sum is over all (ordered) $i$-tuples $(j_1,\dots,j_i)$
of positive integers such that $j_1+...+j_i=k$ and $m_1$ of the numbers $l_1,\dots , l_i$ are equal to $1$, $m_2$ are equal to $2$ and so on (\cite{Averbuh}, p. 234, \cite{Lloyd}, p. 359). The coefficient  $ \frac{k!}{j_1!\dots j_i! m_1!\dots m_k!}$ will henceforth be denoted by
$a^k_{(j_1,\dots,j_i)} $.
\end{proof}
%
%
%
%
Due to the transition functions of the bundle $({\pi}_k,T^kM,M)$, we
can see that generally it is a smooth fibre bundle.

To compute the local forms for the change of charts of $TT^kM$ on
overlaps  we remind some facts about fibre bundles. Let
$p:E\longrightarrow M$ be a smooth Banach fibre bundle with fibres
diffeomorphic to the Banach manifold $F$ and the Banach spaces
$\mathbb{E}$ and $\mathbb{B}$ are  the model spaces for $F$ and
$M$ respectively. Suppose that
$\Phi=(\phi,\bar\phi):E|_U\longrightarrow \phi(U)\times \bar\phi(E|_U)\subseteq\phi(U)\times
\mathbb{E}$ be a local trivialization where $(U,\phi)$ is a chart
of $M$ and let $(\Psi=(\psi,\bar\psi),V)$ be another local
trivialization with $U\cap V\neq{\emptyset}$. Then $\Psi\circ
\Phi^{-1}(x,\xi)=((\psi\circ\phi^{-1})(x),G_{\Psi\Phi}(x,\xi))$
where $G_{\Psi\Phi}:U\cap V\times \bar\p(E|_{U\cap V})\subseteq U\cap V\times\mathbb{E}\longrightarrow
\mathbb{E}$ is smooth. The canonical induced trivialization for
$TE$ is
\begin{eqnarray}
T(\Psi\circ
\Phi^{-1})(x,\xi;y,\eta)&=&\big((\psi\circ\phi^{-1})(x),G_{\Psi\Phi}(x,\xi),d(\psi\circ\phi^{-1})(x)y \\
\nonumber&&,\partial_1G_{\Psi\Phi}(x,\xi)y+\partial_2G_{\Psi\Phi}(x,\xi)\eta\big).
\end{eqnarray}
for any $(x,\xi,y,\eta)\in
\phi(U\cap V)\times\bar\p(E|_{U\cap V})\times\mathbb{B}\times\mathbb{E}$.
(Throughout this paper the symbol $\partial_i$ denotes the partial derivative
with respect to the $i$-th variable.)\\
Now
using the transition functions for the bundle $\pi_k:T^kM\to M$ we can
compute the transformation rule of natural charts of $T{T}^kM$.
For any $u=(x,\xi_1,...,\xi_k)\in U_\a\times \E^k$ and
$(y,\e_1,...\e_k)\in\E^{k+1}$ we have
\begin{equation}\label{TE}
T\S_{\b\a}^k(u;y,\e_1,\dots,\e_k)=
\textbf{\Big(}\S_{\b\a}^k(u);d\s_{\b\a}(x)y~,\bar{\e}_1,\dots,\bar{\e}_k)\textbf{\Big)}
\end{equation}
where
\begin{eqnarray*}
&\bar{\e}_i=\frac{1}{i!}\{d\s_{\b\a}(x)(i!\e_i)+\sum_{j_1+j_2=i}a^i_{(j_1,j_2)}
[d^2\s_{\b\a}(x)(j_1!\e_{j_1},j_2!\xi_{j_2})&\\
&+d^2\s_{\b\a}(x)(j_1!\xi_{j_1},j_2!\e_{j_2})]
+\dots+ id^i\s_{\b\a}(x)(\xi_1,\xi_1,\dots,\xi_1,\e_1) &\\
&+ d^2\s_{\b\a}(x)(i!\xi_i,y)+\sum_{j_1+j_2=i}a^i_{(j_1,j_2)}  d^3\s_{\b\a}(x)(j_1!\xi_{j_1},j_2!\xi_{j_2},y)&\\
&+\dots+d^{i+1}\s_{\b\a}(x)(\xi_1,\xi_1,\dots,\xi_1,y)\}&\\
&=\frac{1}{i!}\frac{\partial^{i+1}}{\partial s\partial t^i}(\s_{\b\a}\o \bar{c})(t,s)|_{t=s=0}&
\end{eqnarray*}
and
\begin{eqnarray*}
\bar{c}:(-\eps,\eps)^2    &    \to       &   \s_\a(U_\a)\subseteq \E\\
(t,s)&\mto& x+sy+\sum_{j=1}^it^j(\xi_j+\eta_j).
\end{eqnarray*}
%
%
\section{Tangent bundle of order $k$ for Banach manifolds}
This section includes two parts. In the first part  for any linear
connection $\nabla$ on $M$, in the sense of Vilms \cite{Vilms}, we
determine a v.b. morphism $K:T{T}^kM\to\oplus_{i=1}^kTM$ which may
be thought as an special lift of connections. This kind of lift,
named connection maps by Bucataru \cite{Ioan}, induces nonlinear
connections on $T^kM$. Then using $K$ as a key, we determine a
v.b. structure on $\pi_k: T^kM\to M$ which is followed with a
suitable converse.
\subsection{Connection maps in higher order geometry}

Consider the
$C^\infty(T^kM)$-linear map $J:\mathfrak{X}(T^kM)\to \mathfrak{X}(T^kM)$  such that
locally on a chart $({\pi}_k^{-1}(U_\a),\S_\a^k)$ is given by\\
$$J_{\a}(u;y,\e_1,\dots,\e_k)=(u;0,y,\e_1,\dots,\e_{k-1}).$$
for any $u=(x,\xi_1,\dots,\xi_k)\in T^kM$ and every
$(u;y,\e_1,\dots,\e_k)\in T_uT^kM$.
\begin{Def}\label{Def connection map}
A connection map on ${T}^kM$
is a vector bundle morphism\vspace{-2mm}
$$K=(\K ,\KK...,\KKK):T{T}^kM\to\Big(\oplus_{i=1}^kTM,\oplus_{i=1}^k\tau_M,\oplus_{i=1}^kM\Big)\vspace{-2mm}$$
such that for any $1\leq a\leq k-1$, $\KKK\o
J^a=\stackrel{k-a}{K}$ and $\KKK\o J^k={\pi_k}_*$
\end{Def}
Bucataru defined this connection map in the finite dimensional context
\cite{Ioan}. Note that \vspace{-1mm}

\begin{equation*}
\stackrel{a}{K}=\KKK\o J^{k-a}=\KKK\o J^{k-a-1}\o
J=\stackrel{a+1}{K}\o J
\end{equation*}\vspace{-1mm}
and $\stackrel{ a}{K}\o J^a=(\KKK\o J^{k- a})\o J^a={\pi_k}_*$.

\begin{Lem} Locally on a chart $(\pi_k^{-1}(U_\a),\S_\a^k)$ the connection map
$$\oplus_{i=1}^k\S_\a^1 \o K \o T{\S_\a^k}^{-1}:=K_\a=(\K_\a,...,\KKK_\a )$$
at $(u;y,\e_1,...,\e_k)\in
T_uT^kM$ is given by
\begin{eqnarray}\label{local form of K}
&K|_{\a}(u;y,\e_1,...,\e_k)&\\
\nonumber&=\bigoplus_{i=1}^k\Big(x,\e_i+\m_\a(u)\e_{i-1}+
\mm_\a(u)\e_{i-2}+...+\stackrel{i}{M}_\a(u) y\Big)&
\end{eqnarray}\vspace{-2mm}
\end{Lem}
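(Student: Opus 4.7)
The plan is to pin down the local form of $\KKK$ first (the definition constrains it most directly), and then to recover $\st{i}{K}$ for $i<k$ by applying the relation $\st{i}{K}=\KKK\o J^{k-i}$.

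Since $K:TT^kM\to\bigoplus_{i=1}^kTM$ is a vector bundle morphism covering $\pi_k$, each component $\KKK$ is linear in the fibre variable $(y,\e_1,\dots,\e_k)$. In the chart $({\pi_k}^{-1}(U_\a),\S_\a^k)$ this linearity produces smooth maps $A_0(u),\dots,A_k(u)\in L(\E,\E)$ with
\begin{equation*}
\KKK|_\a(u;y,\e_1,\dots,\e_k)=\bigl(x,\,A_0(u)y+A_1(u)\e_1+\dots+A_k(u)\e_k\bigr).
\end{equation*}
A routine induction starting from $J_\a(u;y,\e_1,\dots,\e_k)=(u;0,y,\e_1,\dots,\e_{k-1})$ yields
\begin{equation*}
J_\a^a(u;y,\e_1,\dots,\e_k)=\bigl(u;\underbrace{0,\dots,0}_{a},y,\e_1,\dots,\e_{k-a}\bigr),\qquad 1\le a\le k,
\end{equation*}
so $J^a$ acts as a right shift inserting $a$ zeros.

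Imposing $\KKK\o J^k=(\pi_k)_*$ gives $\KKK|_\a(u;0,\dots,0,y)=(x,A_k(u)y)=(x,y)$ for every $y\in\E$, forcing $A_k(u)=\mathrm{Id}$. I would then relabel $\st{j}{M}_\a(u):=A_{k-j}(u)$ for $1\le j\le k$, with which convention the local expression of $\KKK$ coincides with the claim at $i=k$. For the remaining components I would substitute the shifted tuple
\begin{equation*}
J^{k-i}_\a(u;y,\e_1,\dots,\e_k)=\bigl(u;\underbrace{0,\dots,0}_{k-i},y,\e_1,\dots,\e_i\bigr)
\end{equation*}
into $\KKK|_\a$; only the coefficients $A_{k-i},A_{k-i+1},\dots,A_k$ contribute, and by the relabeling these are $\st{i}{M}_\a,\st{i-1}{M}_\a,\dots,\m_\a,\mathrm{Id}$, so $\st{i}{K}|_\a$ evaluates to $(x,\e_i+\m_\a(u)\e_{i-1}+\dots+\st{i}{M}_\a(u)y)$, matching the displayed formula.

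The argument is essentially linear algebra once the two conditions of Definition~\ref{Def connection map} are read off in coordinates; the only point deserving care is the index bookkeeping under iterates of $J$, which must be arranged so that the relabeling $A_{k-j}\mapsto\st{j}{M}_\a$ aligns the leading coefficient of each $\st{i}{K}$ with $\mathrm{Id}$ and the subsequent ones with $\m_\a,\mm_\a,\dots$ in the correct order.
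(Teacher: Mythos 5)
Your proof is correct and follows essentially the same route as the paper: both exploit fibrewise linearity of the vector bundle morphism to introduce the coefficient maps and then use the identities $\KKK\o J^k={\pi_k}_*$ and $\st{i}{K}=\KKK\o J^{k-i}$ to pin them down. The only difference is organizational — the paper fixes the input direction (evaluating all components on $(u;y,0,\dots,0)$, $(u;0,\e_1,0,\dots,0)$, etc., and defining $\st{i}{M}_\a$ as the $y$-coefficient of the $i$-th component), whereas you fix the top output component $\KKK$ and shift with $J$; the two bookkeepings agree.
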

%
%
\begin{proof}
Since $K$ is bundle morphism there are local maps
$\stackrel{i}{M}_\a:U_\a\dar\E^k\to L(\E,\E)$, $1\leq i\leq k$, such that \vspace{-2mm}
\begin{eqnarray*}
K_\a(u;y,0,...,0)=(x,\m_\a(u)y)\oplus(x,\mm_\a(u)y)\oplus...\oplus
(x,\stackrel{k}{M}_\a(u)y).
\end{eqnarray*}\vspace{-2mm}
Moreover due to the facts $\stackrel{a}{K}\o J^a={\pi_k}_*$ and
$\stackrel{a}{K}=\stackrel{a+1}{K}\o J$ we  get
\begin{eqnarray*}
K_\a(u;0,\e_1,0,...,0)&=&K_\a\o
J(u;\e_1,0,...,0)\\
&=&(x,\e_1)\oplus(x,\m(u)\e_1)\oplus...\oplus(x,\stackrel{k-1}{M}(u)\e_1)
\end{eqnarray*}\vspace{-2mm}
and likewise
\begin{eqnarray*}
K_\a(u;0,0,\e_2,0,...,0)=(x,0)\oplus(x,\e_2)\oplus(x,\m(u)\e_2)...\oplus(x,\stackrel{k-2}{M}(u)\e_2).
\end{eqnarray*}
which  completes the proof.
\end{proof}
%
%
For any $\a,\b\in I$ with $U_{\b\a}\neq\emptyset$ the
compatibility condition for $\stackrel{i}{M}_\a$ and
$\st{i}{M}_\b$, $1\leq i\leq k$, on the overlaps comes from the
fact  $\oplus_{i=1}^k T\s_{\b\a}\o K_\a = K_\b\o T\S_{\b\a}^k$. We apply equality
(\ref{TE}) and the local form of $K$ to obtain \vspace{-1.5mm}
\begin{eqnarray}\label{campatibility for Ma and Mb}
&&d\s_{\b\a}(x)[\e_i+\m_\a(u)\e_{i-1}+\dots+\st{i}{M}_\a(u)y]\\
\nonumber&&=\bar{\e}_i+\m_\b(\bar{u})\bar{\e}_{i-1}+\mm_\b(\bar{u})\bar{\e}_{i-2}+\dots+\st{i}{M}_\b(\bar{u})\bar{y}
\end{eqnarray}
for any $u\in {T}^kM$ and any $(u,y,\e_1,\dots,\e_k)\in T_uT^kM$.
Moreover $\bar{y}=d\s_{\b\a}(x)y$ and $\bar{\eta_1},\dots, \bar{\eta}_k$ are as in the equation (\ref{TE}).
%
%
\begin{The}\label{lifted connection to osckM}
Let $\nabla$ be  a (linear) connection on $M$ with the local
components $\{\G_\a\}_{\a\in I}$. There exists an induced connection
map on ${T}^kM$ with the following local components.
\begin{eqnarray*}
&&\m_\a(x,\xi_1)y=\Gamma_\a(x,\xi_1)y\\
&&\mm_\a(x,\xi_1,\xi_2)y=\frac{1}{2}\Big(\sum_{i=1}^2\partial_i\m_\a(x,\xi_1)(y,i\xi_i)+\m_\a(x,\xi_1)[\m_\a(x,\xi_1)y]\Big),\\
&&\vdots\\
&&\st{k}{M}_\a(x,\xi_1,...,\xi_k)y=\frac{1}{k}\Big(\sum_{i=1}^{k}\partial_i\st{k-1}{M}_\a(x,\xi_1,...,\xi_{k-1})(y,i\xi_i)\\
&&\hspace{3.7cm}+\m_\a(x,\xi_1)[\st{k-1}{M}_\a(x,\xi_1,...,\xi_{k-1})y]\Big)
\end{eqnarray*}
The proof of the compatibility condition  for $\st{i}{M}_\a$ and
$\st{i}{M}_\b$ on overlaps can be found in \cite{iso Osck}.
\end{The}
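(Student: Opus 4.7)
The plan is to define, for each chart $(U_\alpha, \sigma_\alpha)$, the local pieces $\stackrel{i}{M}_\alpha$ inductively from the Christoffel symbols of $\nabla$, assemble them into a local morphism $K_\alpha$ via the formula of the preceding lemma, and then check (i) that these local pieces glue consistently on overlaps and (ii) that the resulting global $K$ satisfies the defining conditions of Definition \ref{Def connection map}. First I would set $\stackrel{1}{M}_\alpha := \Gamma_\alpha$ and then define $\stackrel{i}{M}_\alpha$ for $i = 2, \ldots, k$ via the explicit recursion in the statement; each step combines partial derivatives of $\stackrel{i-1}{M}_\alpha$ in all coordinate slots with the composition $\Gamma_\alpha \circ \stackrel{i-1}{M}_\alpha$, so the outputs are automatically smooth maps into $L(\E, \E)$.

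The main work is then to check the overlap compatibility (\ref{campatibility for Ma and Mb}) by induction on $i$. The base case $i = 1$ is exactly the classical transformation law for Christoffel symbols of $\nabla$. For the inductive step, one differentiates the relation already established for $\stackrel{i-1}{M}$ in each coordinate direction, then rewrites the result using the explicit formula for $\bar{\eta}_i$ in (\ref{TE}) together with the chain rule of order $k$ displayed in (\ref{chain rule of order k lloyd}). Since $\bar{\eta}_i$ involves high-order derivatives $d^j\sigma_{\beta\alpha}$ weighted by the multinomial coefficients $a^i_{(j_1, \ldots, j_p)}$, matching these against the derivatives of $\stackrel{i-1}{M}_\alpha$ on the left is the principal combinatorial obstacle, and the recursion in the statement has been tailored precisely so that the two sides coincide. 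This is where the proof really happens, and as indicated in the theorem statement I would defer the detailed bookkeeping to \cite{iso Osck} rather than reproduce it here.

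It remains to verify the two structural conditions of Definition \ref{Def connection map}, which actually follow from the local form (\ref{local form of K}) alone, without using the specific recursion defining $\stackrel{i}{M}_\alpha$. Iterating $J_\alpha(u; y, \eta_1, \ldots, \eta_k) = (u; 0, y, \eta_1, \ldots, \eta_{k-1})$ a total of $a$ times moves $y$ into slot $a$ and pads zeros to its left; evaluating the $k$-th component of $K_\alpha$ on this input, all terms associated to the left-padded zeros vanish and the remaining sum telescopes to $\stackrel{k-a}{K}_\alpha$ for $1 \leq a \leq k-1$, while for $a = k$ only the contribution of $y$ survives and gives $(\pi_k)_*$. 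This last verification is a purely algebraic inspection of the expression in (\ref{local form of K}), independent of the particular linear connection chosen.
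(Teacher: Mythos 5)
Your proposal is correct and follows essentially the same route as the paper: the paper itself gives no proof beyond stating the recursion and deferring the overlap compatibility of the $\stackrel{i}{M}_\a$ to \cite{iso Osck}, exactly as you do, while the verification of the axioms $\stackrel{k}{K}\o J^a=\stackrel{k-a}{K}$ and $\stackrel{k}{K}\o J^k={\pi_k}_*$ is, as you observe, an immediate algebraic consequence of the triangular local form (\ref{local form of K}) already established in the preceding lemma. Your explicit telescoping check of those axioms is in fact slightly more than the paper writes down.
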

Note that  kernel of $K$ is a distribution, say $H\pi_k$, on
$T^kM$ complementary to the canonical vertical distribution
$V\pi_k$. The horizontal distribution $H\pi_k$ is called the
nonlinear connection associated to $K$ \cite{Miron}.
%
%
%
%
%
%
\subsection{${T}^kM$ as a vector bundle}
For $k\geq 2$, the bundle structure defined in theorem \ref{fibre
bundle structure for TkM} is quite far from being a v.b. due to
the complicated nonlinear transition functions. Here we propose a
v.b. structure on $\pi_k:T^kM\to M$ which makes it a smooth v.b.
isomorphic  to $k$ copies of $TM$. The converse  of the problem is
also true i.e. a v.b. structure on $T^kM$ isomorphic to
$\oplus_{i=1}^kTM$, for $k\geq 2$, yields a linear connection on
$M$. Moreover it will be shown that if for some integer $k\geq2$,
$T^kM$ becomes a v.b over $M$ with the before-mentioned property
then   $T^iM$ also admits a v.b. structure over $M$ for any $i\in
\N$. These v.b. structures simplifies the study of higher tangent
bundles. For example as a consequence, we propose a lifted
Riemannian metric to $T^kM$ which only depends to the original
given metric on the base manifold $M$.
%
%
%
%
%
%
%
\begin{The}\label{osc k admits a vb}
Let $\nabla$ be a (linear) connection on $M$ and $K$ be the induced
connection map introduced in theorem \ref{lifted connection to
osckM}. The following trivializations  define a vector bundle
structure on $\pi_k:{T}^kM\to M$ with the structure group
$GL(\E^k)$.
\begin{eqnarray*}
\P_\a^k:\pi_k^{-1}(U_\a)&\to& \s_\a(U_\a)\times \E^k\\
{[\g,x]}_k&\mto&(\g_\a(0),\g_\a'(0),z^2_\a([\g,x]_k),\dots,z^k_\a([\g,x]_k))
\end{eqnarray*}
where $\g_\a=\s_\a\o \g$ and
\begin{eqnarray*}
z^2_\a([\g,x]_k)&=&\frac{1}{2}\Big\{\frac{1}{1!}\g_\a''(0)+\m_\a[\g_\a(0),\g_\a'(0)]\g_\a'(0)\Big\},\dots,\\
%
%
%
z^k_\a([\g,x]_k)&=&\frac{1}{k}\Big\{\frac{1}{(k-1)!}\g_\a^{(k)}(0)+\frac{1}{(k-2)!}\m_\a[\g_\a(0),\g_\a'(0)]\g_\a^{(k-1)}(0)+\dots\\
&&+\stackrel{k-1}{M}_\a
[\g_\a(0),\g_\a'(0),\dots,\frac{1}{(k-1)!}\g_\a^{(k-1)}(0)]\g_\a'(0)\Big\}.
\end{eqnarray*}
Moreover for any $(x,\xi_1,\dots\xi_k)\in \s_\a(U_{\a\b})\times\E^k$ we have
\begin{equation*}
\P_\b^k\o{{\P}_\a^k}^{-1}(x,\xi_1,\xi_2,\dots,\xi_k)=\Big(\s_{\b\a}(x),d\s_{\b\a}(x)\xi_1,...,d\s_{\b\a}(x)\xi_k\Big).
\end{equation*}
\end{The}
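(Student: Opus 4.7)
The plan is to identify each $z^i_\a$ intrinsically via the connection map $K$, so that the linearity of the transitions falls out of the vector-bundle-morphism property of $K$ established in Theorem \ref{lifted connection to osckM}. I first rewrite, using $\xi_j=\tfrac{1}{j!}\g_\a^{(j)}(0)$ (the coordinates provided by $\S_\a^k$),
\[
z^i_\a=\xi_i+\tfrac{i-1}{i}\,\m_\a(x,\xi_1)\,\xi_{i-1}+\dots+\tfrac{1}{i}\,\st{i-1}{M}_\a(x,\xi_1,\dots,\xi_{i-1})\,\xi_1,
\]
so that $\P_\a^k=P_\a\o\S_\a^k$ for a smooth upper-triangular polynomial self-map $P_\a$ of $\s_\a(U_\a)\times\E^k$ whose leading part in each $\xi_i$ is the identity. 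Solving recursively in $i$ produces a smooth inverse, so $P_\a$ is a diffeomorphism and $\P_\a^k$ is well-defined, bijective, and smooth onto $\s_\a(U_\a)\times\E^k$.

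The crux of the argument is an intrinsic reading of $z^i_\a$. Attached to any representative $\g$ of $u:=[\g,x]_k$ is the natural lift $\tilde{\g}(s):=[\g(\cdot+s),\g(s)]_k\in T^kM$, a smooth curve with $\tilde{\g}(0)=u$. In the chart $\S_\a^k$ its velocity $v:=\tilde{\g}'(0)\in T_uT^kM$ has components
\[
(y,\e_1,\dots,\e_k)=\bigl(\xi_1,\,2\xi_2,\,3\xi_3,\,\dots,\,k\xi_k,\,\tfrac{1}{k!}\g_\a^{(k+1)}(0)\bigr).
\]
Plugging this into the local form (\ref{local form of K}) of $K_\a$ and matching coefficients yields, for $2\le i\le k$,
\[
\st{i-1}{K}_\a(u;v)=\bigl(x,\,i\,z^i_\a([\g,x]_k)\bigr),
\]
while $\pi_{k\ast}v=\xi_1=z^1_\a$ handles the $i=1$ slot. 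Since none of $\st{1}{K}_\a,\dots,\st{k-1}{K}_\a$ involves the ``extra'' component $\e_k$, the right-hand side depends only on $u$ and not on the choice of representative.

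The transition rule is now immediate: $\tilde{\g}$ is an intrinsic curve in $T^kM$, so $v$ is an intrinsic tangent vector and its $\b$-chart expression is $T\S_{\b\a}^k(v)$. The compatibility identity (\ref{campatibility for Ma and Mb}) is exactly the statement that $K$ is a vector bundle morphism into $\oplus_{i=1}^kTM$, so
\[
\st{i-1}{K}_\b\bigl(\bar u;\,T\S_{\b\a}^k v\bigr)=d\s_{\b\a}(x)\,\st{i-1}{K}_\a(u;v).
\]
Dividing by $i$ and using the identification above gives $z^i_\b=d\s_{\b\a}(x)\,z^i_\a$ for $2\le i\le k$, while the $i=1$ slot is the usual rule for $\xi_1$. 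Reassembling produces the announced overlap formula, and the cocycle condition for $x\mapsto\bigoplus_{i=1}^k d\s_{\b\a}(x)\in GL(\E^k)$ is inherited from the chain rule for $\{\s_{\b\a}\}$.

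The main obstacle is the combinatorial bookkeeping needed to check the identification $\st{i-1}{K}_\a(u;v)=i\,z^i_\a$ uniformly in $i$; once that is in hand, everything else (smoothness, bijectivity, the cocycle, and the linearity of the overlaps that pins down the structure group as the diagonal copy of $GL(\E)$ inside $GL(\E^k)$) follows formally from Theorem \ref{lifted connection to osckM}.
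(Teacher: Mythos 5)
Your argument is correct, and it reorganizes the paper's computation around a conceptual identification that the paper leaves implicit. The paper proves the overlap formula by brute force: it expands $i\,z^i_\b$ with the order-$k$ chain rule (Step 1), substitutes into the compatibility condition (\ref{campatibility for Ma and Mb}) the particular data $y=\bar\g'(0)$, $\e_l=\bar\g^{(l+1)}(0)/l!$ together with the auxiliary two-parameter curve $\bar c_i(t,s)$ (Step 2), and then cancels the nonlinear chain-rule terms (Step 3). Your observation is that this particular data is exactly the $\a$-chart expression of the velocity $v=\tilde\g'(0)$ of the complete lift $\tilde\g(s)=[\g(\cdot+s),\g(s)]_k$ (indeed the paper's $\bar c_i(t,s)$ is precisely the first-order Taylor expansion in $s$ of $\bar\g(t+s)$), and that $\st{i-1}{K}_\a(u;v)=(x,\,i\,z^i_\a)$ with no dependence on the spurious top component $\e_k=\frac{1}{k!}\g_\a^{(k+1)}(0)$. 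This turns Steps 1--3 into a one-line consequence of the morphism identity $\oplus_{i=1}^k T\s_{\b\a}\o K_\a=K_\b\o T\S_{\b\a}^k$: the chain-rule bookkeeping is absorbed once and for all into the already-established compatibility of the $\st{i}{M}_\a$ on overlaps rather than redone by hand. Your treatment of bijectivity and smoothness via the triangular polynomial diffeomorphism $P_\a$ with $\P_\a^k=P_\a\o\S_\a^k$ is likewise equivalent to, but tidier than, the paper's inductive construction of representative curves $\bar\g_i$. The only point you defer --- the ``combinatorial bookkeeping'' --- amounts to checking that $\e_j=(j+1)\xi_{j+1}$ for $1\le j\le k-1$ and that the arguments $(x,\xi_1,\dots,\xi_j)$ at which $\st{j}{M}_\a$ is evaluated in (\ref{local form of K}) coincide with those appearing in the definition of $z^i_\a$; both checks are routine and do hold, so there is no gap.
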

\begin{proof}

Clearly for any $\a\in I$, $\P_\a^k$ is well defined and
injective.

For any $(x,\xi_1,...,\xi_k)\in \s_\a(U_\a)\times
\E^{k}$ we  show that there exits a curve $\g$ in $M$ such that
$\P_\a^k([\g,x]_k)=(x,\xi_1,...,\xi_k)$. If
$\bar\g_2(t)=x+t\xi_1+\frac{t^2}{2}\{2\xi_2-\m_\a(x,\xi_1)\xi_1\}$
then $z^2_\a([\g_2,x]_k)=\xi_2$ where
$\g_2(t)=\s_\a^{-1}\o{\bar\g}_2(t)$. Now by induction we assume
that for $i-1<k$ there exists $\bar\g_{i-1}$, a polynomial of
degree $i-1$, such that $\g_{i-1}=\s_\a^{-1}\o\bar\g_{i-1}$ and
$z^{j}_\a([\g_j,x]_k)=\xi_j$ for $2\leq j\leq i-1$. Now $\bar\g_i$
is defined by setting
\begin{eqnarray*}
&{\bar\g}_i(t)=\bar\g_{i-1}(t)+\frac{t^i}{i}\{i\xi_i-\frac{1}{(i-2)!}\m_\a(x,\xi_1)\bar\g^{(i-1)}_{i-1}(0)
-\dots&\\
&\stackrel{i-1}{M}_\a(x,\xi_1,\frac{1}{2!}\bar\g_2^{(2)}(0),...,\frac{1}{(i-1)!}\bar\g_{i-1}^{(i-1)}(0))\xi_1\}&
\end{eqnarray*}
and $\g_i(t)=\s_\a^{-1}\o\bar\g_i(t)$. Set $\g=\g_k$. As a result
$z^k([\g,x]_k)=\xi_k$  which means that
$\P_\a^k([\g,x]_k)=(x,\xi_1,...,\xi_k)$. Since
$proj_1\o\P_\a^k=\pi_k$ it follows that  $T^kM$ is a fibre bundle.\\
For any $\a,\b\in I$ with $U_{\b\a}\neq{\emptyset}$ we prove that
$\P_{\b\a}^k:=\P_\b^k\o{\P_\a^k}^{-1}$ induces a linear
isomorphism between fibers. In fact we have
\begin{eqnarray*}
&&\P_{\b\a}^k(x,\xi_1,\xi_2,\dots,\xi_k)=\P_\b^k([\g,x]_k)\\
&&=\Big( (\s_{\b}\o\g)(0),      (\s_{\b}\o\g)'(0),z_\b^2([\g,x]_k),\dots,z_\b^k([\g,x]_k)    \Big) .
\end{eqnarray*}
\textbf{Step 1.} Since $\s_{\b}\o\g=\s_{\b}\o\s_\a^{-1}\o\s_\a\o\g=\s_{\b\a}\o\bar{\g}$, for any $2\leq i\leq k$ we get
\begin{eqnarray*}
i z_\b^i([\g,x]_k)&= & \frac{(\s_{\b\a}\o\bar{\g})^{(i)}(0)}{(i-1)!}   + \m_\b  (\s_{\b\a}(x),(\s_{\b\a}\o\bar{\g})'(0)    )
\frac{(\s_{\b\a}\o\bar{\g})^{(i-1)}(0)}{(i-2)!}    \\
&&+\dots +\stackrel{i-1}{M}_\b     \big(   \s_{\b\a}(x),\dots ,\frac{  (\s_{\b\a}\o\bar{\g})^{(i-1)}(0)   }{(i-1)!}   \big)
(\s_{\b\a}\o\bar{\g})'(0).
\end{eqnarray*}
Using the chain rule formula (\ref{chain rule of order k lloyd}) we conclude that
\begin{eqnarray*}
&&i z_\b^i([\g,x]_k)=  \frac{(\s_{\b\a}\o\bar{\g})^{(i)}(0)}{(i-1)!}   + \m_\b  \big(   \s_{\b\a}(x), (\s_{\b\a}\o\bar{\g})^{\prime}(0)  \big) \frac{(\s_{\b\a}\o\bar{\g})^{(i-1)}(0)}{(i-2)!}    \\
&&+\dots +\stackrel{i-1}{M}_\b     \big(   \s_{\b\a}(x),\dots ,\frac{  (\s_{\b\a}\o\bar{\g})^{(i-1)}(0)  }{(i-1)!}   \big) (\s_{\b\a}\o\bar{\g})'(0)\\
&=&   \frac{1}{(i-1)!}   \{   d\s_{\b\a}(x)(\bar\g^{(i)}(0))        +     \sum_{j_1+j_2=i} a_{(j_1,j_2)}^i d^2\s_{\b\a}(x)[\bar\g^{(j_1)}(0) , \bar\g^{(j_2)}(0)  ]  \\
&&+\dots  + d^{i}\s_{\b\a}(x)[  \bar{\g}'(0),\dots,\bar{\g}'(0)]                            \}\\
&&+ \m_\b  \big(   \s_{\b\a}(x), (\s_{\b\a}\o\bar{\g})^{\prime}(0)  \big) \frac{(\s_{\b\a}\o\bar{\g})^{(i-1)}(0)}{(i-2)!}    \\
&&+\dots +\stackrel{i-1}{M}_\b     \big(   \s_{\b\a}(x),\dots ,\frac{  (\s_{\b\a}\o\bar{\g})^{(i-1)}(0)  }{(i-1)!}   \big) (\s_{\b\a}\o\bar{\g})'(0)
\end{eqnarray*}
\begin{eqnarray*}
&=&      d\s_{\b\a}(x)   \Big[   i\xi_i  - \m_\a(x,\xi_1)\frac{\bar{\g}^{(i-1)}}{(i-2)!}      -
\dots    -        \stackrel{i-1}{M}_\a  \big(   x,\xi_1,\dots,\frac{\bar{\g}^{(i-1)}}{(i-1)!}   \big)    \bar{\g}'(0)  \Big]     \\
&&\frac{1}{(i-1)!}     \{     \sum_{j_1+j_2=i} a_{(j_1,j_2)}^i d^2\s_{\b\a}(x)[\bar\g^{(j_1)}(0) , \bar\g^{(j_2)}(0)  ]         +\dots  \\
&&+ d^{i}\s_{\b\a}(x)[  \bar{\g}'(0),\dots,\bar{\g}'(0)]                            \}\\
&&+ \m_\b  \big(   \s_{\b\a}(x), (\s_{\b\a}\o\bar{\g})^{\prime}(0)  \big) \frac{(\s_{\b\a}\o\bar{\g})^{(i-1)}(0)}{(i-2)!}    \\
&&+\dots +\stackrel{i-1}{M}_\b     \big(   \s_{\b\a}(x),\dots ,\frac{  (\s_{\b\a}\o\bar{\g})^{(i-1)}(0)  }{(i-1)!}   \big) (\s_{\b\a}\o\bar{\g})'(0)\\
\end{eqnarray*}

\textbf{Step 2.}  Setting $x=\bar{\g}(0)$, $\xi_1=\bar{\g}'(0)$, $\dots$, $\xi_{i-1}=\frac{\bar\g^{(i-1)}}{(i-1)!}$, $y=\bar{\g}'(0)$, $\eta_1=\frac{ \bar{\g}^{(2)}(0)}{1!~}$, $\dots$, $\e_{i-1}=\frac{\bar\g^{(i)}}{(i-1)!}$ and
$$\bar{c}_i(t,s)= \bar\g(0)+s\bar\g'(0)+\sum_{l=1}^{i-1 }\frac{t^l}{l!}(\bar\g^{(l)}(0)+s \bar\g^{(l+1)}(0)  ),$$
then, equation (\ref{campatibility for Ma and Mb}) implies that
\begin{eqnarray*}
&&  d\s_{\b\a}(x)   \Big[  \m_\a(x,\xi_1)\frac{\bar{\g}^{(i-1)}}{(i-2)!}      +
\dots    +        \stackrel{i-1}{M}_\a   \big(   x,\xi_1,\dots,\frac{\bar{\g}^{(i-1)}}{(i-1)!}   \big)   \bar{\g}'(0)  \Big]     \\
&=&-d\s_{\b\a}(x)\frac{\bar\g^{(i)}(0)}{(i-1)!} +\frac{\partial^{i}}{\partial s\partial t^{i-1}}\frac{(\s_{\b\a}\o\bar{c}_i)}{(i-1)!}(0,0)\\
&&+           \m_\b \Big(  (\s_{\b\a}\o\bar{c}_i)(0,0)  ,  \frac{\partial}{\partial t}(\s_{\b\a}\o\bar{c}_i)(0,0)  \Big) \frac{\partial^{i-1}}{\partial s\partial t^{i-2}}\frac{(\s_{\b\a}\o\bar{c}_i)}{(i-2)!}(0,0) + \dots\\
&&  + \stackrel{i-1}{M}_\b \Big(  (\s_{\b\a}\o\bar{c}_i)(0,0)  ,\dots ,  \frac{\partial^{i-1}}{\partial t^{i-1}}   \frac{(\s_{\b\a}\o\bar{c}_i)}{(i-1)!}   (0,0)  \Big)  \frac{\partial}{\partial s} (\s_{\b\a}\o\bar{c}_i)   (0,0).
\end{eqnarray*}
It is not hard to check that, for any $1\leq l\leq i-1$, $\frac{\partial^l} {   \partial s\partial t^{l-1}}  (\s_{\b\a}\o\bar{c}_i)(0,0)=(\s_{\b\a}\o\bar\g)^{(l)}(0)$ and  $\frac{\partial^l} {   \partial t^{l}}  (\s_{\b\a}\o\bar{c}_i)(0,0)=(\s_{\b\a}\o\bar\g)^{(l)}(0)$.

These last two equations yield
\begin{eqnarray*}
&&  d\s_{\b\a}(x)   \Big[ - \m_\a(x,\xi_1)\frac{\bar{\g}^{(i-1)}}{(i-2)!}      -
\dots    -        \stackrel{i-1}{M}_\a   \big(   x,\xi_1,\dots,\frac{\bar{\g}^{(i-1)}}{(i-1)!}   \big)   \bar{\g}'(0)  \Big]     \\
&=& +  d\s_{\b\a}(x)\frac{\bar\g^{(i)}(0)}{(i-1)!} -  \frac{(\s_{\b\a}\o\bar{\g})^{(i)}(0)}{(i-1)!}\\
&&-           \m_\b \Big(  \s_{\b\a}(x)  ,  (\s_{\b\a}\o\bar{\g})'(0)  \Big) \frac{      (\s_{\b\a}\o\bar{\g})^{(i-1)}(0)       }  {(i-2)!} - \dots\\
&&  - \stackrel{i-1}{M}_\b \Big(  \s_{\b\a}(x)  , \dots,   \frac{(\s_{\b\a}\o\bar{\g})^{(i-1)}(0)}  {(i-1)!}  \Big)  (\s_{\b\a}\o\bar{\g})' (0)
\end{eqnarray*}
\begin{eqnarray*}
&&     \hspace{-17 mm}         =- \frac{1}{(i-1)!}     \{     \sum_{j_1+j_2=i} a_{(j_1,j_2)}^i d^2\s_{\b\a}(x)[\bar\g^{(j_1)}(0) , \bar\g^{(j_2)}(0)  ]         +\dots  \\
&&     \hspace{-10 mm}     + d^{i}\s_{\b\a}(x)[  \bar{\g}'(0),\dots,\bar{\g}'(0)]                            \}\\
&&     \hspace{-10 mm}  -  \m_\b \Big(  \s_{\b\a}(x)  ,  (\s_{\b\a}\o\bar{\g})'(0)  \Big) \frac{      (\s_{\b\a}\o\bar{\g})^{(i-1)}(0)       }  {(i-2)!} - \dots\\
&&    \hspace{-10 mm}    - \stackrel{i-1}{M}_\b \Big(  \s_{\b\a}(x)  , \dots,   \frac{(\s_{\b\a}\o\bar{\g})^{(i-1)}(0)}  {(i-1)!}  \Big)  (\s_{\b\a}\o\bar{\g})' (0)
\end{eqnarray*}
\textbf{Step 3.}
As a consequence of steps 1 and 2 we get
$$
i z_\b^i([\g,x]_k)=id\s_{\b\a}(x)\xi_i \quad   ;  \quad2\leq i\leq k
$$
that is
\begin{eqnarray*}
\P_{\b\a}^k(x,\xi_1,\xi_2,\dots,\xi_k)=\Big(\s_{\b\a}(x),d\s_{\b\a}(x)\xi_1,\dots,d\s_{\b\a}(x)\xi_k\Big).
\end{eqnarray*}
This last means that  for any $\a,\b\in I$ with $U_{\b\a}\neq \emptyset$
\begin{eqnarray*}
\P_{\b\a}^k:U_{\b\a}  & \to & GL(\E^k)\\
x  &\mto&    \Big(d\s_{\b\a}(x)(.)  ,\dots,   d\s_{\b\a}(x)(.)\Big).
\end{eqnarray*}
is smooth. As a result, the family of trivializations  $\{  (\pi_k^{-1}(U_\a),\P_\a^k)  \}_{\a\in I}$ provides a vector bundle structure for $\pi_k:T^kM\to M$ with the fibres isomorphic to $\E^k$. Moreover, since $\pi_k:T^kM\to M$  and $\oplus_{i=1}^k \pi_1:  \oplus_{i=1}^k TM\to M$ have the same transition functions and fibres, then they are isomorphic vector bundles over $M$.
\end{proof}
%
%
%
%
%
%
%
%
The v.b. structure which is proposed in theorem \ref{osc k admits a vb} is affective in the following sense:
\begin{Pro}\label{lemma p k to p k-1}
Suppose that for some $k\geq 2$, $\pi_k:T^kM\to M$ admits a v.b. structure isomorphic to $\oplus_{i=1}^kTM$. Then
$\pi_{k-1}:T^{k-1}M\to M$ also possesses  a v.b. structure
isomorphic to $\oplus_{i=1}^{k-1}TM$.
\end{Pro}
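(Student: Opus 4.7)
The plan is to reduce to Theorem \ref{osc k admits a vb} applied in order $k-1$. For this I first extract a linear connection on $M$ from the hypothesized v.b.\ structure on $T^kM$, and then feed that connection into the theorem to obtain the desired v.b.\ structure on $T^{k-1}M$.

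To extract the connection, I compare the assumed v.b.\ trivializations $\P_\a^k:\pi_k^{-1}(U_\a)\to \s_\a(U_\a)\times\E^k$ with the natural charts $\S_\a^k$ from Theorem \ref{fibre bundle structure for TkM}. The change of chart $\P_\a^k\o(\S_\a^k)^{-1}$ is a fibre-preserving smooth diffeomorphism of $\s_\a(U_\a)\times\E^k$ whose $i$-th output component depends only on $(x,\xi_1,\dots,\xi_i)$, mirroring the layered shape of $\S_{\b\a}^k$ computed in Theorem \ref{fibre bundle structure for TkM}. The requirement that $\P_{\b\a}^k$ be fibrewise linear, together with the quadratic term $\tfrac12 d^2\s_{\b\a}(x)(\xi_1,\xi_1)$ appearing in the second component of $\S_{\b\a}^k$, forces the second component of $\P_\a^k\o(\S_\a^k)^{-1}$ to contain a term quadratic in $\xi_1$ whose bilinear part $\G_\a(x)(\cdot,\cdot)$ satisfies the Vilms transformation law on $U_\a\cap U_\b$; hence $\{\G_\a\}$ is the Christoffel datum of a linear connection $\nabla$ on $M$. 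This is the converse implication referred to in the introduction.

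With $\nabla$ in hand I invoke Theorem \ref{osc k admits a vb} with $k$ replaced by $k-1$. It produces trivializations $\P_\a^{k-1}:\pi_{k-1}^{-1}(U_\a)\to \s_\a(U_\a)\times\E^{k-1}$ whose overlap maps are $(x,\xi_1,\dots,\xi_{k-1})\mapsto(\s_{\b\a}(x),d\s_{\b\a}(x)\xi_1,\dots,d\s_{\b\a}(x)\xi_{k-1})$, so $\pi_{k-1}:T^{k-1}M\to M$ becomes a smooth v.b.\ with structure group $GL(\E^{k-1})$. Since this cocycle coincides with that of $\oplus_{i=1}^{k-1}TM$, the two bundles are v.b.-isomorphic, as claimed.

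The main obstacle is the extraction step: one must verify that the bilinear symbols $\G_\a$ read off from the quadratic part of $\P_\a^k\o(\S_\a^k)^{-1}$ really transform as Christoffel symbols of a Vilms connection, rather than as those of some nonlinear or higher-order object. This is precisely where the hypothesis $k\geq 2$ enters, since $\S_{\b\a}^1$ is already linear on fibres and carries no second-derivative information from which a connection could be reconstructed. Once this compatibility is checked, the higher components of $\P_\a^k\o(\S_\a^k)^{-1}$ contribute no additional geometric datum: they are recursively determined by $\nabla$ in the manner of the $\stackrel{i}{M}_\a$ of Theorem \ref{lifted connection to osckM}.
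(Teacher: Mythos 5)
Your strategy differs from the paper's: you first extract a linear connection from the hypothesized v.b.\ structure on $T^kM$ and then re-apply Theorem \ref{osc k admits a vb} at order $k-1$, whereas the paper never leaves the bundle level --- it simply truncates the given trivializations, setting
$\P_\a^{k-1}([\g,x]_{k-1})=\big(\s_\a(x),(\s_\a\o\g)'(0),z_\a^2,\dots,z_\a^{k-1}\big)$,
and checks bijectivity of these maps together with the fact that the overlap cocycle is the diagonal $\big(d\s_{\b\a}(x),\dots,d\s_{\b\a}(x)\big)$. The paper's route is shorter and self-contained; note also that the paper deliberately orders things the other way around, deducing the converse ``v.b.\ structure $\Rightarrow$ linear connection'' \emph{from} this proposition by iterating down to $k=2$ and citing the known second-order result. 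Your argument inverts that dependency, so it is only non-circular to the extent that your connection-extraction step stands on its own.

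And that is where the genuine gap lies. First, you assert that the $i$-th component of $\P_\a^k\o(\S_\a^k)^{-1}$ depends only on $(x,\xi_1,\dots,\xi_i)$. The hypothesis gives you an \emph{abstract} v.b.\ atlas isomorphic to $\oplus_{i=1}^kTM$; post-composing any such atlas with a smooth $GL(\E^k)$-valued map that scrambles the $k$ factors yields an equally valid atlas with no such triangular structure, so the ``layered shape'' is not automatic and must either be proved or built into a careful choice of trivializations. Second, even granting the layered form, the claim that the quadratic-in-$\xi_1$ part of the second component transforms by the Vilms law is exactly the content of the second-order converse (Dodson--Galanis, Theorem 3.4, or \cite{ali}, Theorem 2.3); you identify this as ``the main obstacle'' and then proceed under the assumption that ``this compatibility is checked,'' which is to defer the only nontrivial point of your argument. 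As written, the proof is incomplete: either carry out that verification (or cite the $k=2$ result after showing the $T^kM$ structure genuinely induces one on $T^2M$, which again requires the layered form), or adopt the paper's direct truncation, which bypasses the connection entirely.
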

\begin{proof}
Let $\{ (  {\pi_k}^{-1}(U_\a),  \P_\a^k)\}_{\a\in I}$ be a family of
trivializations for $\pi_k:T^kM\to M$ induced by the atlas
$\mathcal{A}=\{(U_\a,\s_\a)\}_{\a\in I}$ of $M$ as in theorem \ref{osc k admits a vb}. Then for any
$[\g,x]_k\in T^k_xM$ we have
\begin{equation*}
\P_\a^k([\g,x]_k)=\big(\s_\a(x),  (\s_\a\o\g)'(0)  ,  z_\a^2([\g,x]_k),\dots,z_\a^k([\g,x]_k)\big).
\end{equation*}
We claim that $\{({\pi_{k-1}}^{-1}(U_\a),  \P_\a^{k-1})\}_{\a\in
I}$ defines a v.b. structure on $\pi_{k-1}:T^{k-1}M\to M$ where
\begin{equation*}
\P_\a^{k-1}([\g,x]_k)=\big(\s_\a(x),  (\s_\a\o\g)'(0)  ,  z_\a^2([\g,x]_k),\dots,z_\a^{k-1}([\g,x]_{k-1})\big).
\end{equation*}
We show that  $\P_\a^{k-1}$ is bijective. In fact suppose that
$\P_\a^{k-1}([\g_1,x]_{k-1})=\P_\a^{k-1}([\g_2,x]_{k-1})$ then consider
representatives of the classes $[\g_1,x]_{k-1}$ and $[\g_2,x]_{k-1}$ such
that $\g_1^{(k)}(0)=\g_2^{(k)}(0)$. Since
$\P_\a^{k}([\g_1,x]_{k})=\P_\a^{k}([\g_2,x]_{k})$ then injectivity of
$\S_\a^k$ yields $[\g_1,x]_k=[\g_2,x]_k$ which means that
$[\g_1,x]_{k-1}=[\g_2,x]_{k-1}$.

Suppose that $(x,\xi_1,...,\xi_{k-1})\in \s_\a(U_\a)\times
\E^{k-1}$ be an arbitrary element. Since $\P_\a^k$ is bijective,
there exists $[\g,x]_k\in T^k_xM$ such taht
$$\P_\a^k([\g,x]_k)=(x,\xi_1,...,\xi_{k-1},0).$$ Clearly
$\P_\a^{k-1}([\g,x]_{k-1})=(x,\xi_1,...,\xi_{k-1})$ that is $\P_\a^{k-1}$ is surjective. Finally
\begin{eqnarray*}
\P_{\b\a}^{k-1}:\s_\a(U_{\b\a})&\to& GL(\E^{k-1})\\
x&\mto& \Big( \underbrace  {d\s_{\b\a}(x)(.),\dots,  d\s_{\b\a}(x)(.) }_{(k-1)-times} \Big)
\end{eqnarray*}
is smooth. According to proposition 1.2 page 45 of \cite{Lang},  we deduce that $\pi_{k-1}:T^{k-1}M\to M$ admits
a v.b. structure isomorphic to $\oplus_{i=1}^{k-1}TM$.
\end{proof}
%
%
If we restrict our attention to $C^k$-partitionable manifolds (see e.g. \cite{ali}) then, we have the following inverse for theorem \ref{osc k admits a vb}.
\begin{The}
Suppose that $k\geq 2$. If $\pi_k:T^kM\to M$ admits a v.b.
structure isomorphic to $\oplus_{i=1}^kTM$, then  a linear connection on $M$ can be defined.
\end{The}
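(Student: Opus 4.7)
My approach proceeds in three main steps.

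First, I would reduce to the case $k=2$. Applying Proposition~\ref{lemma p k to p k-1} inductively $(k-2)$ times, the hypothesis on $\pi_k$ descends to $\pi_2 : T^2M \to M$, yielding a v.b.\ structure on $T^2M$ isomorphic to $TM \oplus TM$. This is essentially the situation settled by Dodson and Galanis \cite{Dod-Gal}, and what follows adapts their argument to the present framework.

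Second, I would extract local Christoffel-type data. Using the given isomorphism $T^2M \cong TM \oplus TM$, transport the natural v.b.\ trivializations of $TM \oplus TM$ (induced by the atlas $\mathcal{A}$ of $M$) to obtain v.b.\ trivializations $\{(\pi_2^{-1}(U_\a),\P_\a^2)\}_{\a \in I}$ of $T^2M$ whose fiber transitions are block-diagonal,
\[
\P_{\b\a}^2(x,\xi_1,\xi_2) = \bigl(\s_{\b\a}(x),\, d\s_{\b\a}(x)\xi_1,\, d\s_{\b\a}(x)\xi_2\bigr).
\]
Comparing with the natural fiber-bundle charts $\S_\a^2$ of Theorem~\ref{fibre bundle structure for TkM}, whose overlap maps carry the quadratic defect $\tfrac12 d^2\s_{\b\a}(x)(\xi_1,\xi_1)$ in the $\xi_2$-slot, define smooth fiber-preserving maps $F_\a$ by
\[
\P_\a^2 \circ (\S_\a^2)^{-1}(x,\xi_1,\xi_2) = \bigl(x,\, F_\a(x,\xi_1,\xi_2)\bigr),
\]
and read off from the second component of $F_\a$ a smooth symmetric bilinear map $\G_\a : \s_\a(U_\a) \to \mathcal{L}^2_{\mathrm{sym}}(\E;\E)$ that records the quadratic $\xi_1$-dependence which the v.b.\ fiber structure imposes on the natural coordinates.

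Third, I would verify the Vilms-type connection transformation law and globalize. Substituting the overlap identity $F_\b \circ \S_{\b\a}^2 = \P_{\b\a}^2 \circ F_\a$ and isolating the part quadratic in $\xi_1$ produces
\[
\G_\b(\s_{\b\a}(x))(d\s_{\b\a}(x)\xi,\, d\s_{\b\a}(x)\xi) = d\s_{\b\a}(x)\bigl[\G_\a(x)(\xi,\xi)\bigr] + d^2\s_{\b\a}(x)(\xi,\xi),
\]
which is precisely the cocycle identifying $\{\G_\a\}_{\a\in I}$ as local components of a symmetric linear connection on $M$ in the sense of Vilms. The $C^k$-partitionability hypothesis then supplies a smooth partition of unity subordinate to $\{U_\a\}$ through which the locally defined $\G_\a$ can be assembled, in the standard convex way, into a globally consistent linear connection $\nabla$ on $M$.

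The hardest step is the second one: \emph{a priori}, $F_\a(x,\cdot,\cdot)$ is only a smooth diffeomorphism of $\E^2$, not a polynomial of fixed degree. One must show that, after the assumed isomorphism with $\oplus^2 TM$ has been used to normalize the fiber transitions to block-diagonal form, $F_\a$ necessarily differs from a linear map by a pure quadratic shear in $\xi_1$, with no interfering mixed or higher-order corrections. Isolating exactly this shear is what turns the abstract existence of a v.b.\ structure on $T^2M$ into concrete Christoffel data obeying the transformation law above.
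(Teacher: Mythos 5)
Your proposal is correct and follows essentially the same route as the paper: the reduction to $k=2$ by iterating Proposition~\ref{lemma p k to p k-1} is exactly the paper's argument, and your second and third steps simply unfold the content of the cited result (Dodson--Galanis Theorem~3.4, or \cite{ali} Theorem~2.3) rather than invoking it as a black box. The extra detail you supply about extracting the Christoffel data and the Vilms cocycle is consistent with how that cited theorem is proved, so there is no substantive divergence.
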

\begin{proof}
For $k> 2$ one can iterate lemma \ref{lemma p k to p k-1} and
conclude that $\pi_2:T^2M\to M$ admits a v.b. structure
isomorphic to $TM\oplus TM$. Then according to  \cite{Dod-Gal} theorem 3.4
or \cite{ali} theorem 2.3 there exists a linear connection on $M$.
\end{proof}

\begin{Cor}
\textbf{i.} For $k\geq 2$, $\pi_k:T^kM\to M$ admits a v.b.
structure isomorphic to $\oplus_{i=1}^kTM$ if and only if $M$ is
endowed
with a linear connection.\\
\textbf{ii.} If for some $k\geq 2$, $\pi_k$ becomes a v.b.
isomorphic to $\oplus_{i=1}^kTM$ then for every $i\in \N$ the
tangent bundle $T^iM$ also admits a v.b. structure isomorphic to
$\oplus_{j=1}^iTM$.
\end{Cor}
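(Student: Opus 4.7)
The plan is to assemble both parts directly from the preceding results; essentially no new computation is needed.

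For part~(i), the ``if'' direction is precisely Theorem~\ref{osc k admits a vb}: given a linear connection $\nabla$ on $M$, the trivializations $\P_\a^k$ built from the induced connection map $K$ have transition functions in $GL(\E^k)$, and these coincide with the transition functions of $\oplus_{i=1}^k TM$, yielding the desired isomorphism. The ``only if'' direction is the Theorem stated immediately before this Corollary: one iterates Proposition~\ref{lemma p k to p k-1} downwards to descend from $T^kM$ to $T^2M$, and then invokes the Dodson--Galanis result (or the author's analogous construction) to extract a linear connection on $M$ from the v.b.\ structure on $T^2M$.

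For part~(ii), the case $i=1$ is automatic since $T^1M=TM$ is canonically a vector bundle over $M$ isomorphic to $\oplus_{j=1}^1 TM$. For $i\geq 2$, the hypothesis yields, via part~(i), a linear connection $\nabla$ on $M$. Applying Theorem~\ref{osc k admits a vb} with $k$ replaced by $i$ and using this $\nabla$, one obtains trivializations $\P_\a^i$ of $\pi_i:T^iM\to M$ with transition functions in $GL(\E^i)$, and the resulting vector bundle is isomorphic to $\oplus_{j=1}^i TM$. Note that the $i$-th order structure thus produced depends only on the recovered $\nabla$, not on the particular integer $k$ for which the v.b.\ structure was assumed.

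There is no substantial obstacle. The one point deserving a moment of care is confirming that the connection extracted via part~(i) is a \emph{linear} connection on the base $M$ (as opposed to on some intermediate higher-order bundle), since this is what permits re-entering Theorem~\ref{osc k admits a vb} for arbitrary $i\in\N$; but this is precisely the content of the Theorem cited in part~(i). Given that, the proof reduces to stating the two cases and citing the relevant results.
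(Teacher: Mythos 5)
Your proposal is correct and follows exactly the route the paper intends: the corollary is an immediate assembly of Theorem \ref{osc k admits a vb} (the ``if'' direction), the converse theorem obtained by iterating Proposition \ref{lemma p k to p k-1} down to $T^2M$ and invoking Dodson--Galanis (the ``only if'' direction), and then re-applying Theorem \ref{osc k admits a vb} with the recovered connection for arbitrary $i$. No further comment is needed.
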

\subsection{Lifting of a  Riemannian metric}
Invoking theorem \ref{osc k admits a vb} we  introduce an special
lift of a given Riemannian metric  $g$ from the base manifold $M$ to its higher order tangent bundle $T^kM$. Let
$\{(U_\a,\s_\a)\}_{\a\in I}$ be an atlas for $M$. Denote by
$g_\a:TU_\a\times TU_\a\to \R$ the local  representative of the
metric $g$ restricted to the chart $(U_\a,\s_\a)$. Fix $k\in \N$
and consider the v.b. trivializations introduced in theorem
\ref{osc k admits a vb}. For every $\a\in I$ define the  bilinear
symmetric form
\begin{equation*}
G_\a^k:\pi_k^{-1}(U_\a)\times \pi_k^{-1}(U_\a)\to \R
\end{equation*}
mapping $\Big([\g_1,x]_k,[\g_2,x]_k\Big)$ to
\begin{equation*}
\sum_{i=1}^kg_\a(x)\Big(proj_i\o\P_\a^k([\g_1,x]_k),proj_i\o\P_\a^k([\g_2,x]_k)\Big)
\end{equation*}
where $proj_i$ stands for the projection to the $(i+1)$'th factor.

Due to the transition functions $\P_{\b\a}^k$  the family
$\{G_\a\}_{\a\in I}$ defines a Riemannian metric on $T^kM$.
\begin{Rem}
In the case that $M$ is modeled on a Hilbert manifold (or a self
dual Banach space \cite{Lang}),  we deal with Riemannian metrics.
But if we go one step further then we will loose the
definiteness condition of our metrics.
\end{Rem}
\begin{Rem}\label{rem reducing the structure group to O(Ek)}
Let $M$ be a Hilbert manifold. As a result of theorem \ref{osc k admits a vb} and theorem 3.1, chapter VII \cite{Lang} we can assume  that a system of local trivializations $\{(\P_\a^k,\pi_k^{-1}(U_\a) )\}_{\a\in I}$ consists only orthogonal trivializations that is the transition maps take values in the orthogonal (or Hilbert)  group
$$\mathbb{O}(\E^k)=\{h\in GL(\E^k)~;~ \langle hv,hw \rangle=\langle v,w \rangle~~;~~v,w\in \E^k\}$$
(see also \cite{hfb}).
\end{Rem}

%
\subsection{Lifting of Lagrangians to Higher order tangent bundles}
In this section, using theorem \ref{osc k admits a vb}, we introduce a lift for Lagrangian form the base manifold to its higher order tangent bundles. To this end, we first  review the concepts of Lagrangian and Lagrangian vector field from \cite{Jer-Cher} and \cite{Miron}.

Let $M$ be smooth manifold modeled on the Banach space $\E$. A Lagrangian on $M$ is a smooth map $L:TM\to \R$ and the associated fibre derivative is the map
\begin{eqnarray*}
FL:TM   \to   T^*M
\end{eqnarray*}
where $FL(v)w=\frac{d}{dt}L(v+tw)|_{t=0}$ for any $v,w\in T_xM$.

\begin{Def}
A bilinear continuous map $B:\E\times \E\to\R$ is called weakly nondegenerate if for any $y\in\E$ the map $B^b:\E\to\E^*$; $B^b(y)z=B(y,z)$ is injective. We call $B$   nondegenerate (or strongly   nondegenerate) if $B^b$ is an isomorphism \cite{Jer-Cher}.
\end{Def}
Note that if $\E$ is a finite dimensional Banach space, then there is no difference between strong and weak nondegeneracy.

In a chart $(U_\a,\s_\a^1)$ of $TM$, let $L_\a$ represent $L$, that is $L_\a=L\o{\s_\a^1}^{-1}$. The Lagrangian $L$ is called (weakly) nondegenerate if for any chart $(U_\a,\s_\a^1)$, $\partial_2^2L_\a(x,y):L^2_{sym}(\E,\E)\to \R$ is (weakly) nondegenerate where $\partial_2$ denotes the partial derivative with respect to the second variable.

In finite dimensions this reads
\begin{equation*}
rank(g_{ij}(x,y))=rank \big(  \frac{1}{2}\frac{\partial^2L_\a(x,y)}{\partial y^i\partial y^j}  \big)_{1\leq i,j\leq dim (M)}=dim (M)
\end{equation*}
where $\s_\a^1=(x^i,y^i)_{1\leq i\leq n}$ is a local chart of $TM$.

We define the action of $L$  by $A:TM\to \R$, $A(v)=FL(v)v$ and the energy of $L$ is $E=A-L$. Locally we have
\begin{equation*}
E_\a(x,y)=\partial_2 L_\a(x,y)y-L_\a(x,y).
\end{equation*}

\begin{Def}
The vector field $Z_E\in\mathfrak{X}(TM)$ locally defined by
\begin{eqnarray*}
Z_E:TM|_{U_\a}  &\to &   TTM|_{U_\a}\\
(x,y)  &\mto&  (x,y,y,Z_\a(x,y))
\end{eqnarray*}
is called a Lagrangian vector filed for $L$ (\cite{Jer-Cher}) where
\begin{equation*}
Z_\a(x,y)={[\partial_2^2 L_\a(x,y)]}^{-1}  \big(   \partial_1L_\a(x,y)-\partial_1(\partial_2L_\p(x,y)y)  \big)
\end{equation*}
\end{Def}
It is easily seen that $Z_E$ is a second order vector field and the family $\{\m_\a=\partial_2 Z_a\}_{\a\in I}$ defines a connection on $M$. Then theorem \ref{osc k admits a vb} guarantees that $(\pi_k,T^kM,M)$ admits a vector bundle structure. A (weakly) nondegenerate lagrangian of order $k$ on $M$ is a differentiable map $L^k:T^kM\to M$ for which $\partial_{k+1}^2L_\a:L^2_{sym}(\E,\E)\to\R$, $\a\in I$, is (weakly) nondegenerate.

Let $L$ be a  nondegenerate Lagrangian on $M$. Then $L^k$ is a nondegenerate Lagrangian of order where
\begin{eqnarray*}\label{lifted lagrangian}
L_\a^k:\pi_k^{-1}(U_\a)   &   \to &  \R\\
\nonumber[\g,x]_k  &\mto &  \sum_{i=1}^kL_\a \big( \g_\a(0),proj_i\o\P_\a^k( [\g,x]_k ) \big) ~~;~~\a\in I
\end{eqnarray*}
(see also \cite{popescu} and \cite{Miron} for different lifted Lagrangians).
%
\section{Infinite order tangent bundle }\label{Section T infty M}
For any $x\in M$ and $\g_1,\g_2\in C_x$ define the \textbf{infinite
equivalence relation } denoted by $\approx^\infty$ as follows
\begin{equation*}
\g_1\approx^\infty_x \g_2 \textrm{ ~~if and only if for any }k\in \N,
~~\g_1\approx^k_x \g_2.
\end{equation*}
The equivalence class containing $\g$ is called an infinite tangent
vector at $x$ and is denoted by $[\g,x]_\infty$. Alternatively, we
may set $[\g,x]_\infty=\cap_{k=1}^\infty[\g,x]_k$ where the
intersection in non-empty since $\g$ belongs to $[\g,x]_k$ for any
$k\in \N$. The \textbf{infinite tangent space} at $x$, $T^\infty_x
M$ is defined to be $T^\infty_xM:=C_x/\approx^\infty_x$. The
infinite tangent bundle to $M$ is denoted by $T^\infty M$ where
$T^\infty M:=\cup_{x\in M}T^\infty_xM$. The canonical projection
$\pi_\infty:T^\infty M\to M$ projects the equivalence class
$[\g,x]_\infty$ onto $x$. If no confusion can rise, we write
$T^\infty M$ for both $T^\infty M$ as a manifold and $T^\infty M$
as a bundle over $M$.

We now propose a generalized Fr\'{e}chet manifold (v.b.) structure
for $T^\infty M$  that will be of aid in considering $T^\infty M$
as the projective limit of Banach  manifolds (v.b.'s) $T^kM$.

There are natural difficulties with Fr\'{e}chet manifolds, bundles
and even spaces. For example the pathological structure of the
general linear group on Fr\'{e}chet spaces puts in the question
defining a v.b. structure for $T^\infty M$ \cite{split}, \cite{GAl-VB}. Moreover there are serious drawbacks in the study of
differential equations on Fr\'{e}chet manifolds \cite{2de},
\cite{Hamilton}. In order to overcome these difficulties we will
use the projective limit tools to endow $T^\infty M$ with a
reasonable manifold and v.b. structure.

First we give some hints about a wide class of Fr\'{e}chet
manifolds i.e. those which may be considered as projective limits
of Banach manifolds (For more details see \cite{Dod-Gal} and the references therein). Let
$\{M^i,\phi^{ji}\}_{i,j\in\N}$ be a projective family of Banach
manifolds where the model spaces $\{\E^i\}_{i,\in\N}$, respectively,
also form a projective system of Banach spaces with the given
connecting morphisms $\{\r^{ji}:\E^j\to \E^i;~ j\geq
i\}_{i,j\in\N}$. Elements of $M:=\varprojlim M^i$ consist of all threads
${(x_i)}_{i\in \N}\in\prod_{i=1}^\infty M^i$ where
$\p^{ji}(x_j)=x_i$ for all $j\geq i$. Suppose that for every
thread ${(x_i)}_{i\in\N}\in\lim M^i$ there exists a projective
system of charts $\{U^i,\phi^{i}\}_{i\in\N}$ such that $x_i\in U^i$ and
$\lim U^i$ is open in $M:=\lim M^i$. Then $M$ admits a Fr\'{e}chet
manifold structure on $\F$ with the corresponding charts $\{\lim
U^i,\lim\p^i)\}$. Furthermore for any $i\in \N$ we have the
natural projections $\p_i:M\to M^i$; $(x_k)_{k\in\N}\mto x_i$ and
$\r_i:(e_k)_{k\in\N}\mto e_i$.

In our case  for any natural number $i$, $M^i:=T^iM$ and
$\E^i:=\overbrace{\E\times \E...\times \E}^{\textrm{i+1 times}}$
with the usual product norm $\parallel.\parallel_i$. For  $j\geq
i$, the connecting morphism $\p^{ji}:T^jM\to T^iM$ maps the class
$[\g,x]_j$ onto $[\g,x]_i$ and $\r^{ji}:\E^j\to \E^i$ is just
projection to the first $i+1$ factors. The canonical projective
systems of charts are
$\{\big({\pi_i}^{-1}(U_\a),\P_\a^i\big)\}_{i\in \N}$ rising from
theorem \ref{osc k admits a vb}. Consequently $T^\infty M$ admits
a \textbf{smooth Fr\'{e}chet manifold structure} modeled on the
Fr\'{e}chet space $\F=\lim\E^i\subseteq \prod_{k=1}^\infty$. Note that $\F$ is a Fr\'{e}chet
space with the associated metric
$$d(x,y)=\sum_{i=1}^\infty\frac{\parallel x_i-y_i
\parallel_i}{2^i(1+\parallel x_i-y_i\parallel_i)}$$
where
$x,y\in\F:=\varprojlim \E^i$, $x_i=\r_i(x)$, $y_i=\r_i(y)$ and
$\r_i:\F\to \E^i$; ${(x_k)}_{k\in\N}\mto x_i$ is  the canonical
projection.

In a further step we will try to supply $\pi_\infty:T^\infty M\to
M $ with a generalized v.b. structure. \\
Suppose that for any
$i\in\mathbb{N}$, $(\pi_i,E^i,M)$ be a Banach v.b. on $M$ with the
fibres of type $\mathbb{E}^i$ where
$\{\mathbb{E}^i,\r^{ji}\}_{i,j\in \mathbb{N}}$ also forms  a
projective system of Banach spaces. With these notations we state
the following definition from \cite{GAl-VB}.
\begin{Def}
The system $\{(\pi_i,E^i,M), f^{ji}\}_{i,j\in\mathbb{N}}$ is
called a \textbf{strong projective system of Banach v.b.'s}
over the same basis $M$ if;\\
i) $\{E^i, f^{ji}\}_{i,j\in\mathbb{N}}$ is a projective system of
Banach manifolds.\\
ii) For any ${(x^i)}_{i\in\mathbb{N}}\in F:=\lim E^i$, there
exists a projective system of trivializations
$\tau^i:{\pi_i}^{-1}(U)\longrightarrow U\times \mathbb{E}^i$  of
$(E^i,\pi_i,M)$ such that $x^i\in U\subseteq M$ and $(id_U\times\r^{ji})\circ\tau^j=\tau^i\circ
f^{ji}$ for all  $j\geq i$.
\end{Def}
Now the projective systems of v.b's is defined by setting
$E^i:=T^iM$, $\tau^i_\a:=(\s_\a^{-1}\times id_{\E^i})\o\P_\a^i$ and
$\p^{ji},\r^{ji}$ as in the previous part.

For any $i\in \N$
define the Banach Lie group
\begin{equation*}
\mathcal{H}^0_i(\mathbb{E}^i)=\{(l_1,\dots,l_i)\in\prod_{j=1}^i  \mathcal{L}(\mathbb{E}^j,\mathbb{E}^j);~~
\r^{jk}\circ l_j=l_k\circ\r^{jk}~\textrm{for all} ~k\leq j\leq
i\}\vspace{-2mm}
\end{equation*}
Using proposition 1.2 of \cite{GAl-VB} we conclude that
$\pi_\infty:T^\infty M\to M$ admits a generalized v.b. structure
over $M$ with fibres isomorphic to the Fr\'{e}chet space $\F=\lim
\E^i$ and the structure group ${\mathcal{H}}^0(\F):=\lim
\mathcal{H}^0_i(\mathbb{E}^i)$.
\begin{Rem}
In the case where $M$ is a finite dimensional manifold, $ T^\infty
M$ becomes a Fr\'{e}chet v.b. over $M$ with fibres isomorphic to
the known Fr\'{e}chet space $\R^\infty$.
\end{Rem}
%
%
\begin{Examp}
In this example we introduce  the restricted symplectic group $Sp_2(\mathcal{H})$  (\cite{Galvan}) and we propose a vector bundle structure for $\big( \pi_k,T^kSp_2(\mathcal{H})$, $Sp_2(\mathcal{H} \big)$ for $k\in\N\cup\{\infty\}$.
Let $(\mathcal{H}, \langle,\rangle)$ be an infinite dimensional real Hilbert space and $J$ be a complex structure on $\mathcal{H}$. The symplectic  group $Sp(\mathcal{H})$ is defined by
\begin{equation*}
Sp(\mathcal{H})=\{g\in GL(\mathcal{H}): g^*Jg=J\}.
\end{equation*}
The Lie algebra of $Sp(\mathcal{H})$ is
\begin{equation*}
\mathfrak{sp}(\mathcal{H})=\{x\in \mathcal{B}(\mathcal{H});~ xJ=-Jx^*\}
\end{equation*}
Denote by $\mathcal{B}_2(\mathcal{H})$ the Hilbert–-Schmidt class $B_2(\mathcal{H}) = \{g \in \mathcal{B}(\mathcal{H}) : Tr(g^*g) < \infty\}$
where $Tr$ is the usual trace and $\mathcal{B}(\mathcal{H})$ is the set of all bounded linear operators on $\mathcal{H}$.
Define the restricted symplectic group  to be
\begin{equation*}
Sp_2(\mathcal{H})=\{g\in Sp(\mathcal{H}): g-1\in B_2(\mathcal{H})\}
\end{equation*}
Then the Lie algebra of $ Sp_2(\mathcal{H})$ is
$\mathfrak{sp}_2(\mathcal{H})  =     \{   x\in B_2(\mathcal{H}): xJ=-Jx^*  \}$
which is a closed subspaces of $B_2(\mathcal{H})$ and hence a Hilbert space \cite{Galvan}. Moreover, for any $g\in Sp_2(\mathcal{H})$,
\begin{equation*}
(TSp_2(\mathcal{H}))_g=g\mathfrak{sp}_2(\mathcal{H})\subset B_2(\mathcal{H})
\end{equation*}
is an inner product space endowed with the left invariant Riemannian metric
\begin{equation}\label{left invariant metric}
\langle v,w \rangle_g= \langle g^{-1}v,g^{-1}w \rangle=Tr( (gg^*)^{-1}vw^*)  ~;~ ~ v,w\in T_gSp_2(\mathcal{H})
\end{equation}
However, the Riemannian connection on $Sp_2(\mathcal{H})$ is given by the local form (Christoffel symbol)
\begin{equation*}
2g^{-1}\Gamma_g(gx,gy)=xy+yx+x^*y+y^*x-xy^*-yx^*
\end{equation*}
for any $g\in Sp_2(\mathcal{H})$ and $x,y\in \mathfrak{sp}_2(\mathcal{H})$.

As a consequence  for any $k\in\N$, $\pi_k:T^kSp_2(\mathcal{H})\to Sp_2(\mathcal{H} )$ admits a vector bundle structure with fibres isomorphic to $\mathfrak{sp}_2(\mathcal{H})^k$ and the structure group $GL(\mathfrak{sp}_2(\mathcal{H}))$. Since the base manifold is a Riemannian manifolds, for $k\in\N$, the above vector bundle can be considered as a vector bundle with $\mathbb{O}(\mathfrak{sp}_2(\mathcal{H})^k)$ as its structure group (remark \ref{rem reducing the structure group to O(Ek)}).

Moreover $\pi_\infty:T^\infty Sp_2(\mathcal{H})\to Sp_2(\mathcal{H} )$ becomes a generalized vector bundle with fibres isomorphic to $\mathfrak{sp}_2(\mathcal{H})^\infty=\lim
\mathfrak{sp}_2(\mathcal{H})^i$ and the structure group ${\mathcal{H}}^0(\mathfrak{sp}_2(\mathcal{H})^\infty)$.
\end{Examp}
%
%
%


\bigskip

\end{document}